\title[ Generalized Drazin-Riesz invertibility for operator matrices  ]
{  Generalized Drazin-Riesz invertibility for operator matrices   }
\author[ A. Tajmouati, M. Karmouni and S.Alaoui Chrifi ]
{  A. Tajmouati, M. Karmouni and S. Alaoui Chrifi}
\address{ A. Tajmouati, S. Alaoui Chrifi\, \newline
	Sidi Mohamed Ben Abdellah
	University,
	Faculty of Sciences Dhar Al Mahraz, Laboratory of Mathematical Analysis and Applications, Fez, Morocco.}
\email{abdelaziz.tajmouati@usmba.ac.ma}
\email{safae.alaouichrifi@usmba.ac.ma}
\address{M. Karmouni\newline
	Cadi Ayyad University, Multidisciplinary Faculty, Safi, Morocco.}
\email{mohammed.karmouni@uca.ma}
\subjclass[2010]{47A10, 47A53}
\keywords{Operator matrices; Riesz operators; generalized Drazin-Riesz invertibility; generalized Drazin-Riesz spectrum; essentially Kato operators}
\newtheorem{theorem}{Theorem}[section]
\newtheorem{remark}{Remarks}
\newtheorem{lemma}{Lemma}[section]
\newtheorem{proposition}{Proposition}[section]
\newtheorem{corollary}{Corollary}[section]
\newtheorem{example}{Example}
\begin{document}
	\maketitle
	\begin{abstract}
	Let	$A\in\mathcal{B}(X)$, $B\in\mathcal{B}(Y)$ and $C\in\mathcal{B}(Y,X)$ where $X$ and $Y$ are infinite Banach or Hilbert spaces. Let $M_{C}=\begin{pmatrix}
	A & C \\
	0 & B \\
	\end{pmatrix}$ be $2\times 2$ upper triangular operator matrix acting on $X\oplus Y$. In this paper, we consider some necessary and sufficient conditions for $M_{C}$ to be generalized Drazin-Riesz invertible. Furthermore, the set $\bigcap_{C\in \mathcal{B}(Y,X)}\sigma_{gDR}(M_{C})$ will be investigated and their relation between $\bigcap_{C\in \mathcal{B}(Y,X)}\sigma_{b}(M_{C})$ will be studied, where $\sigma_{gDR}(M_{C})$ and $\sigma_{b}(M_{C})$ denote the generalized Drazin-Riesz spectrum and the Browder spectrum, respectively.
	\end{abstract}
	
\section{Introduction and preliminaries}
Let $X$ and $Y$ denote infinite dimensional complex Banach spaces and $\mathcal{B}(X,Y)$ denotes the algebra of all bounded linear operators from $X$ into $Y$. If $X=Y$ we write $\mathcal{B}(X)$ instead of $\mathcal{B}(X,X)$. For $T\in \mathcal{B}(X)$, we denote $\alpha(T)$ the dimension of the kernel $\mathcal{N}(T)$, $\beta(T)$ the codimension of the range $\mathcal{R}(T)$ and $\sigma(T)$ the spectrum of $T$.
An operator $T\in \mathcal{B}(X)$ is called bounded below if $T$ is one to one and $\mathcal{R}(T)$ is closed. Recall that an operator $T\in \mathcal{B}(X)$ is Kato if $\mathcal{R}(T)$ is closed and $\mathcal{N}(T)\subset \mathcal{R}(T^{n})$, $n\in \mathbb{N}$. Two important class of Kato operators is given by the class of bounded below operators, and the class of surjective operators.\\
We can define the set of upper semi-Fredholm operators, the set of lower semi-Fredholm operators, the set of left semi-Fredholm operators, the set of right semi-Fredholm operators and the set of Fredholm operators respectively as following:
\begin{align*}
	\Phi_{+}(X)& :=\{T\in \mathcal{B}(X): \alpha(T)<\infty \mbox{ and } \mathcal{R}(T) \mbox{ is closed }\};\\
	\Phi_{-}(X)& :=\{T\in \mathcal{B}(X): \beta(T)<\infty \mbox{ and } \mathcal{R}(T) \mbox{ is closed }\};\\
	\Phi_{le}(X)& :=\{T\in \Phi_{+}(X): \mathcal{R}(T) \mbox{ is a complemented subspace of X }\};\\
	\Phi_{re}(X)& :=\{T\in \Phi_{-}(X): \mathcal{N}(T) \mbox{ is a complemented subspace of X }\};\\
	\Phi(X)& :=\Phi_{+}(X)\cap \Phi_{-}(X).
	\end{align*}
The left semi-Fredholm spectrum is denoted by $\sigma_{le}(T)=\{\lambda\in \mathbb{C}: T-\lambda I\notin \Phi_{le}(X)\}$ and the right semi-Fredholm spectrum is denoted by $\sigma_{re}(T)=\{\lambda\in \mathbb{C}: T-\lambda I\notin \Phi_{re}(X)\}$.
If $T\in \mathcal{B}(X)$ is upper semi-Fredholm or lower semi-Fredholm then the index of $T$ is defined as $\mbox{ind}(T) :=\alpha(T)-\beta(T)$. Now, we sall recall that the ascent of an operator $T$ is given by $\mbox{asc}(T) :=\mbox{inf}\{n\in \mathbb{N}: \mathcal{N}(T^{n})=\mathcal{N}(T^{n+1})\}$ and the descent of $T$ is given by $\mbox{des}(T) :=\mbox{inf}\{n\in \mathbb{N}: \mathcal{R}(T^{n})=\mathcal{R}(T^{n+1})\}$, where $\mbox{inf}\>\emptyset = \infty$.\\
An operator $T\in \mathcal{B}(X)$ is upper semi-Browder if $T$ is upper semi-Fredholm and $\mbox{asc}(T)<\infty$. If $T$ is lower semi-Browder then $T$ is lower semi-Fredholm and $\mbox{des}(T)<\infty$. The operator $T\in \mathcal{B}(X)$ is Browder if $T$ is Fredholm with finite ascent and decent. The classes of operators defined above motivate the definition of the following spectra:
\begin{align*}
\sigma_{b_{+}}(T)&=\{\lambda\in \mathbb{C}: T-\lambda I \mbox{ is not upper semi-Browder }\};\\
\sigma_{b_{-}}(T)&=\{\lambda\in \mathbb{C}: T-\lambda I \mbox{ is not lower semi-Browder }\};\\
\sigma_{b}(T)&=\{\lambda\in \mathbb{C}: T-\lambda I \mbox{ is not Browder }\}=\sigma_{b_{+}}(T)\cup \sigma_{b_{-}}(T).
\end{align*}
Taking into account that $\sigma_{b_{+}}(T)=\sigma_{b_{-}}(T^{*})$ and $\sigma_{b_{-}}(T)=\sigma_{b_{+}}(T^{*})$ where $T^{*}$ denotes the dual of $T$. A relevant case is obtained if we assume that $T$ is left semi-Fredholm with finite ascent, in this case $T$ is called left semi-Browder. If $T$ is right semi-Fredholm operator with finite descent then $T$ is right semi-Browder. The left semi-Browder spectrum and the right semi-Browder spectrum of $T\in \mathcal{B}(X)$ are, respectively, denoted by:
\begin{align*}
\sigma_{lb}(T)&=\{\lambda \in \mathbb{C}: T-\lambda I \mbox{ is not left semi-Browder }\};\\
\sigma_{rb}(T)&=\{\lambda \in \mathbb{C}: T-\lambda I \mbox{ is not right semi-Browder }\}.
\end{align*}
Let $M$ be a subspace of $X$ for which $T(M)\subset M$, $T\in \mathcal{B}(X)$ then $M$ is called $T$-invariant, we denote by $T_{M}$ the restriction of $T$ to $M$. We say that $T$ is completely reduced by the pair $(M,N)$ and we denote $(M,N)\in \mbox{Red}(T)$, if $M$ and $N$ are two closed $T$-invariant subspaces of $X$ such that $X=M\oplus N$. Clearly, for $(M,N)\in \mbox{Red}(T)$, the operator $T\in \mathcal{B}(X)$ is of the form $T=T_{M}\oplus T_{N}$ and we say that $T$ is a direct sum of $T_{M}$ and $T_{N}$.

We shall say that an operator $T\in \mathcal{B}(X)$ is Riesz, if $T-\lambda I\in \Phi(X)$ for all $\lambda\in \mathbb{C}\backslash\{0\}$. In \cite[Theorem 3.111]{pa} it was proved that $T$ is Riesz if and only if  $T-\lambda I\in \Phi(X)$ and $\mbox{ind}(T-\lambda I)=0$ for every $\lambda\in \mathbb{C}\backslash\{0\}$. If there exists a non-zero complex polynomial $P(z)$ such that $P(T)$ is Riesz then we say that $T\in \mathcal{B}(X)$ is polynomially Riesz, these operators have been discussed in \cite{hll} and \cite{sddh}.\\
It is useful to mention that for $T\in \mathcal{B}(X)$ and $(M,N)\in\mbox{Red}(T)$ (\cite{sddh},\cite{zzl}):
$$T_{M} \mbox{ and } T_{N} \mbox{ are Browder } \Leftrightarrow T=T_{M}\oplus T_{N} \mbox{ is Browder };$$
$$T_{M} \mbox{ and } T_{N} \mbox{ are Riesz } \Leftrightarrow T=T_{M}\oplus T_{N} \mbox{ is Riesz }.$$

Recall that the concept of "generalized Kato-Riesz decomposition" was defined in \cite{szm}. Namely, an operator $T\in \mathcal{B}(X)$ admits a generalized Kato-Riesz decomposition (abbreviated GKRD) if there exists a pair $(M,N)\in \mbox{Red}(T)$ such that $T_{M}$ is Kato and $T_{N}$ is Riesz. Additionally, if we assume in the definition above that $T_{N}$ is quasi-nilpotent and $N$ is finite-dimensional then $T$ is called essentially Kato. In this case, it is easy to see that $T_{N}$ is nilpotent since every quasi-nilpotent operator on a finite dimensional space is nilpotent. It should be noted that the class of semi-Fredholm operators belong to the class of essentially Kato operators (see for instance, \cite[Theorem 16.21]{vm}.\\
The generalized Kato-Riesz spectrum as well as the essentially Kato spectrum are given by:
\begin{align*}
\sigma_{gKR}(T)&=\{\lambda\in \mathbb{C}: T-\lambda I \mbox{ does not admit a GKRD }\};\\
\sigma_{eK}(T)&=\{\lambda\in \mathbb{C}: T-\lambda I \mbox{ is not essentially Kato }\}.
\end{align*}
The class of generalized Drazin-Riesz invertible operators was first introduced by S\u{C}. \v{Z}ivkovi\'{c}-Zlatanovi\'{c} and M D. Cvetkovi\'{c} in \cite{szm} as follows:
an operator $T\in \mathcal{B}(X)$ is said to be generalized Drazin-Riesz invertible if there exists $S\in \mathcal{B}(X)$ such that
$$TS=ST,\>\>\>\>\>\>\> STS=S,\>\>\>\>\>\>\>   T-TST \mbox{ is Riesz. }$$
It is well known by \cite[Theorem 2.3]{szm} that $T\in \mathcal{B}(X)$ is generalized Drazin-Riesz invertible if and only if there exists $(M,N)\in \mbox{Red}(T)$ such that $T_{M}$ is Browder and $T_{N}$ is Riesz. Moreover, the definition of the following set can also be found in \cite{szm}
$$gDRM(X) :=\{T\in \mathcal{B}(X): T=T_{1}\oplus T_{2} \mbox{ where } T_{1} \mbox{ is bounded below and } T_{2} \mbox{ is Riesz }\};$$
$$gDRQ(X) :=\{T\in \mathcal{B}(X): T=T_{1}\oplus T_{2} \mbox{ where } T_{1} \mbox{ is surjective and } T_{2} \mbox{ is Riesz }\}.$$
Mainly, an operator $T\in \mathcal{B}(X)$ is called generalized Drazin-Riesz bounded below (\textit{resp}, generalized Drazin-Riesz surjective) if $T\in gDRM(X)$ (\textit{resp}, $T\in gDRQ(X)$).
The generalized Drazin-Riesz spectrum is given by
$$\sigma_{gDR}(T)=\{\lambda \in \mathbb{C}: T-\lambda I \mbox{ is not generalized Drzain-Riesz invertible}\}.$$
Among other things, Browder as well as Riesz operators are generalized Drazin-Riesz invertible. Furthermore, $\sigma_{gKR}(T)$ and $\sigma_{gDR}(T)$ are compact subsets of the complex plane $\mathbb{C}$, possibly empty and the generalized Drazin-Riesz spectrum of $T\in \mathcal{B}(X)$ is characterized as follows:
$$\sigma_{gDR}(T)=\sigma_{gKR}(T)\cup \mbox{acc}\sigma_{b}(T).$$

Hilbert space operators are also inserted among this paper. Accurately, $H$ and $K$ will be two infinite dimensional separable Hilbert spaces.\\
For $A\in \mathcal{B}(X)$, $B\in \mathcal{B}(Y)$ and $C\in \mathcal{B}(Y,X)$, we denote by $M_{C}$ the upper triangular operator matrix acting on $X\oplus Y$ of the form $M_{C}=\begin{pmatrix}
A & C \\
0 & B \\
\end{pmatrix}.$\\
In the recent past, the study of upper triangular operator matrices was a matter of great interest. More importantly, if $A\in \mathcal{B}(H)$ and $B\in \mathcal{B}(K)$, the sets $\bigcap_{C\in \mathcal{B}(K,H)}\sigma_{*}(M_{C})$ were considered by many authors (see, \cite{mi}, \cite{dsd}, \cite{slh}), where $\sigma_{*}$ runs over different kind of spectra. For instance, in \cite{slh} S.Zhang et al. have shown that
$$\bigcap_{C\in \mathcal{B}(K,H)}\sigma_{b}(M_{C})=\sigma_{b_{+}}(A)\cup \sigma_{b_{-}}(B)\cup W(A,B),$$
where $W(A,B)=\{\lambda \in \mathbb{C}: \alpha(A-\lambda I)+\alpha(B-\lambda I)\neq \beta(A-\lambda I)+\beta(B-\lambda I)\}$.

The ultimate goal of this paper is to provide sufficient conditions for which $M_{C}$ is generalized Drazin-Riesz invertible. In this regard, if $A\in \mathcal{B}(H)$, $B\in \mathcal{B}(K)$ and the following conditions hold:
\begin{enumerate}
	\item[(i)] $A$ and $B$ both admit a generalized Kato-Riesz decomposition;
	\item[(ii)] the upper semi-Browder spectrum does not cluster at $0$;
	\item[(iii)] the lower semi-Browder spectrum does not cluster at $0$;
	\item[(iv)] there exists $\delta>0$ such that $\alpha(A-\lambda I)+\alpha(B-\lambda I)= \beta(A-\lambda I)+\beta(B-\lambda I)$ for every $0<|\lambda|<\delta$.
\end{enumerate}
We prove then, the existence of $C\in \mathcal{B}(K,H)$ for which $M_{C}$ is generalized Drazin-Riesz invertible. Consequently, we obtain a description of $\bigcap_{C\in \mathcal{B}(K,H)}\sigma_{gDR}(M_{C})$. Additionally, we give necessary and sufficient condition for which $$\bigcap_{C\in \mathcal{B}(K,H)}\sigma_{gDR}(M_{C})=\bigcap_{C\in \mathcal{B}(K,H)}\sigma_{b}(M_{C}).$$

\section{Main results}	
The following lemmas are among the most wiedly used results of this paper. The first lemma is an overview of the punctured neighborhood theorem (\cite[Theorem 18.7]{vm} and \cite[page 35]{pa}).
\begin{lemma}\cite{vm}\label{pt}
	If $T\in \Phi_{+}(X)\cup \Phi_{-}(X)$. Then there exists a constant $\varepsilon>0$ such that for all $\lambda\in D(0,\varepsilon)\backslash\{0\}$, $\alpha(\lambda I-T)$ and $\beta(\lambda I-T)$ are constant. Moreover, $\mbox{ind}(\lambda I-T)= \mbox{ind}(T)$ for every $|\lambda|<\varepsilon$.	
\end{lemma}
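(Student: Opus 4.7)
My plan is to reduce the statement to the punctured neighborhood theorem for Kato operators via the essential Kato decomposition. Since every semi-Fredholm operator is essentially Kato, as recalled in the preliminaries from \cite[Theorem 16.21]{vm}, I would first produce a reduction $(M,N)\in\mbox{Red}(T)$ with $T_M$ Kato, $N$ finite-dimensional and $T_N$ nilpotent. For every scalar $\lambda$ this splits the perturbation as $\lambda I-T=(\lambda I_M-T_M)\oplus(\lambda I_N-T_N)$, so that $\alpha(\lambda I-T)=\alpha(\lambda I_M-T_M)+\alpha(\lambda I_N-T_N)$ and analogously for $\beta$ and the index. Note further that $T_M$ inherits the semi-Fredholm property of $T$, since removing a finite-dimensional summand cannot spoil finiteness of $\alpha$ or $\beta$.

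Next I would dispose of the finite-dimensional nilpotent piece, which is immediate. Because $T_N$ is nilpotent on a finite-dimensional space, $\sigma(T_N)=\{0\}$, hence $\lambda I_N-T_N$ is invertible for every $\lambda\neq 0$. This yields $\alpha(\lambda I_N-T_N)=\beta(\lambda I_N-T_N)=0$ on any punctured disk around the origin, and $\mbox{ind}(\lambda I_N-T_N)=0$ throughout because $N$ is finite-dimensional.

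For the Kato piece the central input is the classical punctured neighborhood theorem for Kato operators: there exists $\varepsilon>0$ such that $\lambda I_M-T_M$ stays Kato for $|\lambda|<\varepsilon$, the quantities $\alpha(\lambda I_M-T_M)$ and $\beta(\lambda I_M-T_M)$ take constant (possibly smaller) values on the punctured disk $0<|\lambda|<\varepsilon$, and $\mbox{ind}(\lambda I_M-T_M)=\mbox{ind}(T_M)$ holds on all of $|\lambda|<\varepsilon$. Adding the contributions from the two summands then gives constancy of $\alpha(\lambda I-T)$ and $\beta(\lambda I-T)$ on the punctured disk and the identity $\mbox{ind}(\lambda I-T)=\mbox{ind}(T_M)+\mbox{ind}(T_N)=\mbox{ind}(T)$ on the full disk, which is exactly the claim.

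The hard part is therefore the Kato punctured neighborhood statement itself; the rest is only direct-sum bookkeeping. If one wished to avoid quoting it, the usual route would pass through the reduced minimum modulus $\gamma(T_M)$ and the construction of a generalised inverse on a Kato complement of $\mathcal{N}(T_M)$. One uses the inclusion $\mathcal{N}(T_M)\subset\mathcal{R}(T_M^n)$ for all $n$ to show that small scalar perturbations remain Kato, and then invariance of the null-space dimension under analytic deformation on the connected components of the Kato resolvent. Since this is precisely the content of \cite[Theorem 18.7]{vm} cited in the statement, I would simply quote it and assemble the pieces as above.
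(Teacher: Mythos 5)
Your argument is correct, and the paper itself offers no proof of this lemma -- it is quoted verbatim from M\"{u}ller's book (Theorem 18.7), so there is nothing to compare against except the standard proof in that reference, which is exactly the route you take: Kato decomposition of a semi-Fredholm operator into a Kato part plus a finite-dimensional nilpotent part, triviality of the nilpotent summand off $\lambda=0$, stability of the Kato part under small scalar perturbation, and direct-sum additivity of $\alpha$, $\beta$ and the index. The only point to tidy up is your last sentence: you should not close the argument by quoting \cite[Theorem~18.7]{vm} for the Kato piece, since that theorem \emph{is} the semi-Fredholm punctured neighborhood statement you are proving; the correct input for the Kato summand is the Kato stability theorem for semi-regular operators (M\"{u}ller, Chapter~12, or \cite{am}), which gives that $\lambda I_M-T_M$ stays Kato with $\alpha$ and $\beta$ constant near $0$ and the index locally constant. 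With that citation repaired, the assembly you describe is complete and is precisely the standard proof.
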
	

\begin{lemma}\cite{szm}\label{mn}
	Let $T\in \mathcal{B}(X)$.
	\begin{enumerate}
		\item[(a)] The following assertions are equivalent:
	\begin{enumerate}
		\item[(i)] $T$ admits a GKRD and $0\notin \mbox{acc}\sigma_{b_{+}}(T)$,
		\item[(ii)] $T=T_{M}\oplus T_{N}$ with $T_{M}$ is upper semi-Browder and $T_{N}$ is Riesz.
	\end{enumerate}
	    \item[(b)]The following assertions are equivalent:
	    \begin{enumerate}
	    	\item[(i)] $T$ admits a GKRD and $0\notin \mbox{acc}\sigma_{b_{-}}(T)$,
	    	\item[(ii)] $T=T_{M}\oplus T_{N}$ with $T_{M}$ is lower semi-Browder and $T_{N}$ is Riesz.
		\end{enumerate}
	\end{enumerate}
\end{lemma}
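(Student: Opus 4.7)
The plan is to establish (a) by directly proving both implications and to deduce (b) by dualizing. The implication (ii)$\Rightarrow$(i) is the easier one: starting from $T = T_M \oplus T_N$ with $T_M$ upper semi-Browder and $T_N$ Riesz, I would invoke the fact recalled in the preliminaries that semi-Fredholm operators are essentially Kato, so $T_M$ further decomposes as $T_M = T_{M_1} \oplus T_{M_2}$ with $T_{M_1}$ Kato and $T_{M_2}$ nilpotent on a finite-dimensional subspace. Then $T = T_{M_1} \oplus (T_{M_2} \oplus T_N)$ is a GKRD, because $T_{M_2} \oplus T_N$ is a direct sum of a nilpotent and a Riesz operator, hence Riesz by the stability under direct sums stated in the preliminaries. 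The spectral condition follows from $\sigma_{b_{+}}(T) = \sigma_{b_{+}}(T_M) \cup \sigma_{b_{+}}(T_N)$ together with $0 \notin \sigma_{b_{+}}(T_M)$ (as $T_M$ is upper semi-Browder) and $\sigma_{b_{+}}(T_N) \subset \{0\}$ (as $T_N$ is Riesz); closedness of $\sigma_{b_{+}}(T_M)$ then yields $0 \notin \mbox{acc}\,\sigma_{b_{+}}(T)$.

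For the harder direction (i)$\Rightarrow$(ii), I would show that the GKRD already supplied by (i) gives the decomposition in (ii). Write $T = T_M \oplus T_N$ with $T_M$ Kato and $T_N$ Riesz. By hypothesis there is $\varepsilon > 0$ with $T - \lambda I$ upper semi-Browder for $0 < |\lambda| < \varepsilon$. Since $T_N$ is Riesz, $T_N - \lambda I$ is Browder for every $\lambda \neq 0$, so the direct-sum structure forces $T_M - \lambda I$ to be upper semi-Browder on the punctured disk. The key claim is that $T_M$ itself is upper semi-Browder. I would establish this via two facts about Kato (semi-regular) operators: first, a Kato operator $S$ with finite ascent must be injective, since the Kato hypothesis yields $N(S^n) \subset R(S)$ for every $n$ and hence surjectivity of $S \colon N(S^{n+1}) \to N(S^n)$, giving the dimension identity $\alpha(S^{n+1}) = \alpha(S^n) + \alpha(S)$, so $\alpha(S^n)$ grows unboundedly unless $\alpha(S) = 0$; second, the Kato stability theorem (the semi-regular analogue of Lemma \ref{pt}) guarantees a disk $|\lambda| < \varepsilon'$ on which $T_M - \lambda I$ is again Kato and the function $\lambda \mapsto \alpha(T_M - \lambda I)$ is constant, \emph{including} at $\lambda = 0$. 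Applying the first fact to $T_M - \lambda I$ (Kato plus upper semi-Browder) for $0 < |\lambda| < \min(\varepsilon, \varepsilon')$ gives $\alpha(T_M - \lambda I) = 0$; constancy from the second fact then forces $\alpha(T_M) = 0$, so $T_M$ is Kato and injective, hence bounded below, hence upper semi-Browder.

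The main technical obstacle is the appeal to Kato stability with constancy of $\alpha$ on the \emph{full} disk around the origin: this is what transfers the injectivity back from the perturbed operators to $T_M$ itself, and it is where the hypothesis that $T_M$ is Kato (rather than merely essentially Kato) is crucially used. Part (b) then follows by a duality argument: applying part (a) to the adjoint $T^{*}$ and using $\sigma_{b_{+}}(T^{*}) = \sigma_{b_{-}}(T)$, together with the fact that admitting a GKRD and being Riesz are both self-dual properties, converts the upper semi-Browder statement for $T^{*}$ into the lower semi-Browder statement for $T$.
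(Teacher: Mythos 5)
This lemma is not proved in the paper at all: it is imported verbatim from \cite{szm} (Theorems 2.4 and 2.5 there), so there is no internal argument to compare yours against. Judged on its own merits, your proof of part (a) is essentially correct. The implication (ii)$\Rightarrow$(i) works as you describe: upper semi-Browder implies semi-Fredholm implies essentially Kato, the regrouped summand (nilpotent on a finite-dimensional space) $\oplus$ (Riesz) is Riesz, and $\sigma_{b_{+}}(T)=\sigma_{b_{+}}(T_{M})\cup\sigma_{b_{+}}(T_{N})\subseteq\sigma_{b_{+}}(T_{M})\cup\{0\}$ with $\sigma_{b_{+}}(T_{M})$ closed and omitting $0$. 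For (i)$\Rightarrow$(ii), your two facts are both genuine: a Kato operator with nontrivial kernel has strictly increasing kernels $\mathcal{N}(S^{n})\subsetneq\mathcal{N}(S^{n+1})$ (since $\mathcal{N}(S^{n})\subseteq\mathcal{R}(S)$), hence infinite ascent; and Kato's stability theorem (\cite[Theorem 1.38]{pa} in Aiena's numbering) gives constancy of $\lambda\mapsto\alpha(T_{M}-\lambda I)$ on a full disc about $0$. You do need to justify that a direct summand of an upper semi-Browder operator is again upper semi-Browder (kernel, closed range and ascent all restrict well), but that is routine. The conclusion that $T_{M}$ is in fact bounded below is consistent with, and slightly stronger than, the stated (ii).

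The one genuine gap is in your derivation of part (b) for the direction (i)$\Rightarrow$(ii). Applying part (a) to $T^{*}$ produces a pair $(M',N')\in\mbox{Red}(T^{*})$ decomposing the \emph{dual} space $X^{*}$, and in a general Banach space a reducing decomposition of $T^{*}$ on $X^{*}$ need not descend to a reducing decomposition of $T$ on $X$; so you cannot simply "convert" the conclusion of (a) for $T^{*}$ into the required direct sum for $T$. The repair is easy and stays on $X$: take the GKRD $T=T_{M}\oplus T_{N}$ already guaranteed by hypothesis (i), note that $T_{M}-\lambda I$ is lower semi-Browder for small $\lambda\neq 0$, and then either dualize \emph{only the summand} $T_{M}$ (so $(T_{M})^{*}$ is Kato with $(T_{M})^{*}-\lambda I$ upper semi-Browder near $0$, hence bounded below by your part (a) argument, hence $T_{M}$ is surjective and therefore lower semi-Browder), or argue symmetrically from scratch using the dual pair of facts: a Kato operator with finite descent is surjective, and $\lambda\mapsto\beta(T_{M}-\lambda I)$ is constant on the Kato disc. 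With that adjustment the whole lemma is proved; the direction (ii)$\Rightarrow$(i) of (b) needs no duality at all, as it mirrors (a) verbatim.
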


	\begin{lemma}\label{L2}
		Let $A\in \mathcal{B}(X)$, $B\in \mathcal{B}(Y)$ and $C\in \mathcal{B}(Y,X)$. If $M_{C}$ is generalized Drazin-Riesz invertible, then the following statements hold:
		\begin{enumerate}
			\item[(i)] $0\notin \mbox{acc}\sigma_{lb}(A)$,
			\item[(ii)] $0\notin \mbox{acc}\sigma_{rb}(B)$,
			\item[(iii)] there exists $\delta>0$ such that $\alpha(\lambda I-A)+\alpha (\lambda I-B)=\beta(\lambda I-A)+ \beta(\lambda I-B)$ for every $0<|\lambda|<\delta$.
		\end{enumerate}
	\end{lemma}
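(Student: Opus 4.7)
The plan is to reduce all three conclusions to information available in a punctured disk where $M_C$ is Browder. Since $M_C$ is generalized Drazin--Riesz invertible, $0 \notin \sigma_{gDR}(M_C) = \sigma_{gKR}(M_C) \cup \operatorname{acc}\sigma_b(M_C)$; in particular $0 \notin \operatorname{acc}\sigma_b(M_C)$, so there exists $\delta>0$ such that $M_C - \lambda I$ is Browder for every $\lambda$ with $0<|\lambda|<\delta$. Fix such a $\lambda$ throughout the rest of the argument; everything then comes from extracting information on the diagonal blocks $A-\lambda I$ and $B-\lambda I$.

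For conclusions (i) and (ii), since $M_C-\lambda I$ is Fredholm it admits a parametrix $S\in\mathcal{B}(X\oplus Y)$ with $S(M_C-\lambda I)-I$ and $(M_C-\lambda I)S-I$ compact. Writing $S=\begin{pmatrix}S_{11}&S_{12}\\S_{21}&S_{22}\end{pmatrix}$ and reading off the $(1,1)$-block of $S(M_C-\lambda I)$ and the $(2,2)$-block of $(M_C-\lambda I)S$ yields that $S_{11}(A-\lambda I)-I_X$ is compact on $X$ and $(B-\lambda I)S_{22}-I_Y$ is compact on $Y$; Atkinson's theorem then gives $A-\lambda I\in\Phi_{le}(X)$ and $B-\lambda I\in\Phi_{re}(Y)$. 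The block computation $(M_C-\lambda I)^n(x,0)^T=((A-\lambda I)^n x,0)^T$ forces $\operatorname{asc}(A-\lambda I)\leq\operatorname{asc}(M_C-\lambda I)<\infty$, and applying the analogous argument to the $Y$-projection of $\mathcal{R}((M_C-\lambda I)^n)$ yields $\operatorname{des}(B-\lambda I)\leq\operatorname{des}(M_C-\lambda I)<\infty$. Hence $A-\lambda I$ is left semi-Browder and $B-\lambda I$ is right semi-Browder, proving (i) and (ii).

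For conclusion (iii), I would invoke the classical index formula for $2\times 2$ upper triangular operator matrices: whenever $M_C-\lambda I$ is Fredholm one has $A-\lambda I\in\Phi_+(X)$, $B-\lambda I\in\Phi_-(Y)$, and $\operatorname{ind}(M_C-\lambda I)=\operatorname{ind}(A-\lambda I)+\operatorname{ind}(B-\lambda I)$, the right-hand side being well defined in $\mathbb{Z}\cup\{\pm\infty\}$. Since $M_C-\lambda I$ is Browder its index vanishes. If $\beta(A-\lambda I)$ and $\alpha(B-\lambda I)$ are both finite, all four defect numbers are finite and the vanishing of the total index rearranges to $\alpha(A-\lambda I)+\alpha(B-\lambda I)=\beta(A-\lambda I)+\beta(B-\lambda I)$. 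Otherwise the index equation forces $\beta(A-\lambda I)=\alpha(B-\lambda I)=\infty$ simultaneously, and both sides of (iii) equal $\infty$; either way the required equality holds.

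The most delicate step I anticipate is promoting $A-\lambda I$ (resp.\ $B-\lambda I$) from mere upper (resp.\ lower) semi-Fredholmness to \emph{left} (resp.\ \emph{right}) semi-Fredholmness, as these notions are distinguished by a complementation hypothesis in the paper's conventions. The parametrix route above is chosen precisely to bypass this: it produces the required one-sided inverses modulo compacts on $X$ and $Y$ directly, so left/right Fredholmness follows automatically via Atkinson's theorem without having to exhibit a complement of $\mathcal{R}(A-\lambda I)$ by hand.
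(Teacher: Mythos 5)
Your proposal is correct, and its opening move is exactly the paper's: from $0\notin\sigma_{gDR}(M_C)=\sigma_{gKR}(M_C)\cup\mbox{acc}\,\sigma_b(M_C)$ you extract a punctured disk $0<|\lambda|<\delta$ on which $M_C-\lambda I$ is Browder, and then read off the three conclusions from the diagonal blocks. The difference is in how the second step is handled: the paper simply cites Theorem 2.9 of Zhang--Zhang--Zhong, which states precisely that Browderness of $M_C-\lambda I$ forces $A-\lambda I$ left semi-Browder, $B-\lambda I$ right semi-Browder, and $\alpha(A-\lambda I)+\alpha(B-\lambda I)=\beta(A-\lambda I)+\beta(B-\lambda I)$, whereas you reprove the needed direction of that result from first principles (parametrix blocks plus Atkinson for membership in $\Phi_{le}$ and $\Phi_{re}$, the monotonicity $\mbox{asc}(A-\lambda I)\le\mbox{asc}(M_C-\lambda I)$ and $\mbox{des}(B-\lambda I)\le\mbox{des}(M_C-\lambda I)$, and index additivity). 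Your version is self-contained and works over general Banach spaces, which is what the lemma requires; the paper's is a one-line appeal to the literature. Two small points in your part (iii): the phrase ``the right-hand side being well defined in $\mathbb{Z}\cup\{\pm\infty\}$'' is not literally true when $\beta(A-\lambda I)=\alpha(B-\lambda I)=\infty$ (one gets $-\infty+\infty$), and the dichotomy you invoke --- that for Fredholm $M_C-\lambda I$ one has $\beta(A-\lambda I)<\infty$ if and only if $\alpha(B-\lambda I)<\infty$ --- deserves a word of justification, e.g.\ via the factorization $M_C-\lambda I=\mbox{diag}(I,B-\lambda I)\bigl(\begin{smallmatrix}I&C\\0&I\end{smallmatrix}\bigr)\mbox{diag}(A-\lambda I,I)$, which shows that Fredholmness of $M_C-\lambda I$ together with Fredholmness of one diagonal entry forces Fredholmness of the other; this is the content of Djordjevi\'c's Theorem 3.2, which the paper itself uses elsewhere. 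With that patch your case analysis is airtight and both sides of (iii) are indeed equal (both finite, or both $\infty$) on the whole punctured disk.
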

	\begin{proof}
		Certainly, $\sigma_{gDR}(M_{C})=\sigma_{gKR}(M_{C})\cup \mbox{acc}\sigma_{b}(M_{C}).$ Since $M_{C}$ is generalized Drazin-Riesz invertible then $0\notin \sigma_{gDR}(M_{C})$, it follows that $0\notin \mbox{acc}\sigma_{b}(M_{C})$, which implying that there exists $\delta>0$ such that $M_{C}-\lambda I$ is Browder for every $0<|\lambda|<\delta$. According to \cite[ Theorem 2.9]{slh} $A-\lambda I$ is left semi-Browder $B-\lambda I$ is right semi-Browder and $\alpha(\lambda I-A)+\alpha (\lambda I-B)=\beta(\lambda I-A)+ \beta(\lambda I-B)$ for every $0<|\lambda|<\delta$. This proved that $0\notin \mbox{acc}\sigma_{lb}(A)$, $0\notin \mbox{acc}\sigma_{rb}(B)$ and $\alpha(\lambda I-A)+\alpha (\lambda I-B)=\beta(\lambda I-A)+ \beta(\lambda I-B)$ for every $0<|\lambda|<\delta$.
		\end{proof}
		
		\begin{proposition}
			Let $A\in\mathcal{B}(X)$, $B\in\mathcal{B}(Y)$ and $C\in\mathcal{B}(Y,X)$. If $M_{C}$ is generalized Drazin-Riesz invertible with a Drazin-Riesz inverse $S$ of the form $\begin{pmatrix}
				U & W \\
				0 & V \\
			\end{pmatrix}$ then $A$ and $B$ are generalized Drazin-Riesz invertible.
			\end{proposition}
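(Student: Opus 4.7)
The plan is to verify that $U$ (resp.\ $V$) serves as a generalized Drazin-Riesz inverse of $A$ (resp.\ $B$), by pushing the three defining identities of $S$ down to analogous identities for $U$ and $V$ with respect to $A$ and $B$.

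First I would expand the relations $M_{C}S=SM_{C}$ and $SM_{C}S=S$ block-wise. Because $S$ is upper triangular, so are $M_{C}S$, $SM_{C}$ and $SM_{C}S$; equating the $(1,1)$-entries of the first relation and of $SM_{C}S=S$ immediately yields $AU=UA$ and $UAU=U$, and equating the $(2,2)$-entries yields $BV=VB$ and $VBV=V$. These are routine block-matrix manipulations that I would not dwell on.

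The substantive part is to show that $A-AUA$ and $B-BVB$ are Riesz. A direct block computation gives
\[
M_{C}-M_{C}SM_{C}=\begin{pmatrix} A-AUA & C' \\ 0 & B-BVB\end{pmatrix},
\]
where $C'$ is a specific combination of $A,B,C,U,V,W$ whose precise form is irrelevant for the argument. This is upper triangular with $A-AUA$ and $B-BVB$ on the diagonal and it is Riesz by hypothesis, i.e.\ $\sigma_{e}(M_{C}-M_{C}SM_{C})\subseteq\{0\}$. I would then invoke the hole-filling principle for essential spectra of $2\times 2$ upper triangular operator matrices (in the same spirit as the spectral and Browder-spectral versions appearing in the references cited in the paper): $\sigma_{e}(A-AUA)\cup\sigma_{e}(B-BVB)$ is contained in $\sigma_{e}(M_{C}-M_{C}SM_{C})$ together with a subset of the bounded connected components of its complement in $\mathbb{C}$. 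Since $\mathbb{C}\setminus\{0\}$ is connected and therefore has no bounded components, this forces $\sigma_{e}(A-AUA),\sigma_{e}(B-BVB)\subseteq\{0\}$, so both operators are Riesz.

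Combining everything, $U$ satisfies $AU=UA$, $UAU=U$ and $A-AUA$ Riesz, hence $A$ is generalized Drazin-Riesz invertible with $U$ as one such inverse; by the symmetric argument the same holds for $B$ with $V$. The main obstacle is the hole-filling step in the middle paragraph; the rest is direct block-matrix algebra, but transferring the Riesz property from the full operator matrix to each of its diagonal blocks requires the essential-spectrum analogue of a principle better known for ordinary and Browder spectra.
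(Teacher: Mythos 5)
Your argument is correct and follows essentially the same route as the paper: the identical block computations yield $AU=UA$, $UAU=U$, $BV=VB$, $VBV=V$ and the upper triangular form of $M_{C}-M_{C}SM_{C}$, and the conclusion that $U$ and $V$ are generalized Drazin-Riesz inverses of $A$ and $B$ is the same. The only divergence is in how one sees that the diagonal blocks of the Riesz triangular matrix are themselves Riesz: the paper first gets $\sigma_{le}(A-AUA)=\sigma_{re}(B-BVB)=\{0\}$ from Djordjevic's perturbation theorem and then upgrades to $\sigma_{e}=\{0\}$ via the polynomially Riesz characterization, while you invoke the filling-holes principle for the essential spectrum directly; both hinge on $\{0\}$ having no holes, so your step is legitimately covered by the cited literature.
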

			\begin{proof}
				Let $S=\begin{pmatrix}
				U & W \\
				0 & V \\
				\end{pmatrix}$ be the generalized Drazin-Riesz inverse of $M_{C}$. It is easily seen that $AU=UA$, $BV=VB$, $UAU=U$ and $VBV=V$ since $M_{C}S=SM_{C}$ and $SM_{C}S=S$. On the other hand
				$$M_{C}-M_{C}SM_{C}=\begin{pmatrix}
				A-AUA & L \\
				0 & B-BVB \\
				\end{pmatrix}$$
				where $L=C-AUC-AWB-CVB$, we have $M_{C}-M_{C}SM_{C}$ is Riesz and hence, $\begin{pmatrix}
				A-AUA-\lambda I & L \\
				0 & B-BVB-\lambda I \\
				\end{pmatrix}$ is Fredholm for every $\lambda\in \mathbb{C}\backslash\{0\}$. Then by \cite[Theorem 3.2]{dsd}, $\sigma_{le}(A-AUA)=\{0\}$ and $\sigma_{re}(B-BVB)=\{0\}$. Therefore, using \cite[Theorem 2.3]{sddh} we have $\sigma_{e}(A-AUA)=\{0\}$ and $\sigma_{e}(B-BVB)=\{0\}$. As a result, $A$ and $B$ are generalized Drazin-Riesz invertible.
				\end{proof}
			
		This theorem outlines sufficient conditions for which $M_{C}$ is generalized Drazin-Riesz invertible for some $C\in \mathcal{B}(K,H)$.
		\begin{theorem}\label{Th1}
			Let $A\in \mathcal{B}(H)$ and $B\in \mathcal{B}(K)$ such that the following statements hold:
			\begin{enumerate}
				\item[(i)] $A$ and $B$ both admit a generalized Kato-Riesz decomposition,
				\item[(ii)] $0\notin \mbox{acc}\sigma_{b_{+}}(A)$,
				\item[(iii)] $0\notin \mbox{acc}\sigma_{b_{-}}(B)$,
				\item[(iv)] there exists $\delta>0$ such that $\alpha(\lambda I-A)+\alpha (\lambda I-B)=\beta(\lambda I-A)+ \beta(\lambda I-B)$ for every $0<|\lambda|<\delta$.
			\end{enumerate}
			Then $M_{C}$ is generalized Drazin-Riesz invertible for some $C\in \mathcal{B}(K,H)$.
		\end{theorem}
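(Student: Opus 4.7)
The plan is to split the problem along the structural decompositions supplied by (i)--(iii) and then choose a block-diagonal $C$ that respects those splittings. First I would invoke Lemma \ref{mn}(a) with (i) and (ii) to write $A = A_1 \oplus A_2$ on $H = H_1 \oplus H_2$, where $A_1$ is upper semi-Browder and $A_2$ is Riesz; dually, Lemma \ref{mn}(b) with (i) and (iii) gives $B = B_1 \oplus B_2$ on $K = K_1 \oplus K_2$ with $B_1$ lower semi-Browder and $B_2$ Riesz. I would then take $C$ of the block-diagonal form $C_1 \oplus 0$ with $C_1 \in \mathcal{B}(K_1,H_1)$ still to be chosen; after reordering $H\oplus K$ as $(H_1\oplus K_1)\oplus(H_2\oplus K_2)$, the matrix $M_C$ splits as
$$M_C \cong \begin{pmatrix} A_1 & C_1 \\ 0 & B_1 \end{pmatrix} \oplus (A_2\oplus B_2).$$
The second summand is Riesz as a direct sum of Riesz operators, so by \cite[Theorem 2.3]{szm} it suffices to produce $C_1$ making the first summand Browder.

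For that I would appeal to the formula of S.~Zhang et al.\ recalled in the introduction: such a $C_1$ exists exactly when $A_1$ is upper semi-Browder, $B_1$ is lower semi-Browder, and $\alpha(A_1)+\alpha(B_1)=\beta(A_1)+\beta(B_1)$. The first two conditions hold by construction, so the task reduces to deriving the index equality at $\lambda=0$ for $A_1$ and $B_1$ from hypothesis (iv).

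The key is to transfer (iv) to the Browder parts. Because $A_1$ is upper semi-Browder, $A_1-\lambda$ is bounded below for all sufficiently small $\lambda\neq 0$, so $\alpha(A_1-\lambda)=0$; dually $\beta(B_1-\lambda)=0$. Because $A_2$ and $B_2$ are Riesz, $A_2-\lambda$ and $B_2-\lambda$ are Fredholm of index $0$ for $\lambda\neq 0$. Substituting these identities into (iv) collapses it to $\alpha(B_1-\lambda)=\beta(A_1-\lambda)$ on some punctured disc, and Lemma \ref{pt} identifies this common value as a constant integer $n$. This forces $A_1$ and $B_1$ to be Fredholm with $\mathrm{ind}(A_1)=-n=-\mathrm{ind}(B_1)$; evaluating at $\lambda=0$ now yields $\alpha(A_1)+\alpha(B_1)=\beta(A_1)+\beta(B_1)$ as required.

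The main obstacle I anticipate is precisely this last transfer: condition (iv) only constrains $A$ and $B$ on a punctured disc, whereas the Browder criterion for $M_{C_1}$ needs the $\alpha/\beta$ equality exactly at $\lambda=0$. The argument works because the Riesz summands $A_2, B_2$ behave like index-zero Fredholm operators away from the origin, while finite ascent and descent collapse the nullity of $A_1-\lambda$ and the defect of $B_1-\lambda$ near zero; together these reduce the global hypothesis to exactly the local identity needed for $A_1, B_1$ at $0$.
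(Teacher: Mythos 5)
Your proposal is correct and follows essentially the same route as the paper: decompose $A$ and $B$ via Lemma \ref{mn} into a semi-Browder part and a Riesz part, take $C=C_1\oplus 0$, transfer hypothesis (iv) to the index identity $\mathrm{ind}(A_1)=-\mathrm{ind}(B_1)$ via the punctured neighbourhood theorem, and invoke the Zhang--Zhang--Zhong criterion to produce $C_1$ making the first block Browder. The only cosmetic difference is that you cancel the Riesz contributions through $\alpha(A_1-\lambda)=\beta(B_1-\lambda)=0$ rather than through $\mathrm{ind}(A-\lambda)=\mathrm{ind}(A_1-\lambda)$, which leads to the same conclusion.
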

	\begin{proof}
		By means of lemma \ref{mn}, there exists a pair $(H_{1},H_{2})\in \mbox{Red}(A)$ such that $A_{H_{1}}=A_{1}$ is upper semi-Browder and $A_{H_{2}}=A_{2}$ is Riesz. Also, there exists a pair $(K_{1},K_{2})\in \mbox{Red}(B)$ such that $B_{K_{1}}=B_{1}$ is lower semi-Browder and $B_{K_{2}}=B_{2}$ is Riesz. It is clear that for each $\lambda\in \mathbb{C}\backslash\{0\}$, $\mbox{ind}(A_{2}-\lambda I)=0$ and $\mbox{ind}(B_{2}-\lambda I)=0$. Furthermore, $A_{1}$ is upper semi-Fredholm and $B_{1}$ is lower semi-Fredholm.
	    So according to lemma \ref{pt} (\textit{punctured neighborhood theorem}) there exists $\varepsilon>0$ such that $\mbox{ind}(A_{1})=\mbox{ind}(A_{1}-\lambda I)$ and $\mbox{ind}(B_{1})=\mbox{ind}(B_{1}-\lambda I)$ for every $|\lambda|<0$.
		It is straightforward that
		$$A-\lambda I=\begin{pmatrix}
		A_{1}-\lambda I & 0 \\
		0 & A_{2}-\lambda I \\
		\end{pmatrix} \mbox{  and    } B-\lambda I=\begin{pmatrix}
		B_{1}-\lambda I & 0 \\
		0 & B_{2}-\lambda I \\
		\end{pmatrix}.$$
		Thus, for each $0<|\lambda|<\varepsilon$
			\begin{equation}\label{eq1}
			 \mbox{ind}(A-\lambda I) = \mbox{ind}(A_{1}-\lambda I)
			 \hspace{0,2cm}\textit{ and }\hspace{0,2cm}\mbox{ind}(B-\lambda I) = \mbox{ind}(B_{1}-\lambda I).
			 \end{equation}
		Combining \ref{eq1} with the statement (iv), we have for $\lambda_{0}\in \mathbb{C}$, $0<|\lambda_{0}|<\mbox{min}\{\varepsilon,\delta\}$.
		\begin{align*}
			\mbox{ind}(A_{1}) &= \mbox{ind}(A_{1}-\lambda_{0} I) = \mbox{ind}(A-\lambda_{0} I) = \alpha(A-\lambda_{0} I)-\beta(A-\lambda_{0} I)\\
			& = -(\alpha(B-\lambda_{0} I)-\beta(B-\lambda_{0} I))= - \mbox{ind}(B-\lambda_{0} I)\\
			&=- \mbox{ind}(B_{1}-\lambda_{0} I)=- \mbox{ind}(B_{1}).
			\end{align*}
		From \cite[Theorem 2.9]{slh} it follows that there exists $C_{1}\in \mathcal{B}(K_{1},H_{1})$ such that:
		$$\begin{pmatrix}
		A_{1} & C_{1} \\
		0 & B_{1} \\
		\end{pmatrix}
		: H_{1}\oplus K_{1}\rightarrow H_{1}\oplus K_{1} \mbox{ is Browder. }$$
	    Define $C\in \mathcal{B}(K,H)$ as follows $\begin{pmatrix}
		C_{1} & 0 \\
		0 & 0 \\
		\end{pmatrix}
		: K_{1}\oplus K_{2}\rightarrow H_{1}\oplus H_{2}$. It is easy to show that $H_{1}\oplus K_{1}$ and $H_{2}\oplus K_{2}$ are closed subspaces of $H\oplus K$ invariant by $M_C=\begin{pmatrix}
		A & C \\
		0 & B \\
		\end{pmatrix}$. Moreover,
		$${M_{C_{H_{1}\oplus K_{1}}}}=\begin{pmatrix}
		A_{1} & C_{1} \\
		0 & B_{1} \\
		\end{pmatrix} \mbox{ is Browder }
		\mbox{ and }{M_{C_{H_{2}\oplus K_{2}}}}=\begin{pmatrix}
		A_{2} & 0 \\
		0 & B_{2} \\
		\end{pmatrix} \mbox{ is Riesz }.$$
		Hence $M_C=\begin{pmatrix}
		A_{1} & C_{1} & 0 & 0 \\
		0 & B_{1} & 0 & 0 \\
		0 & 0 & A_{2} & 0 \\
		0 & 0 & 0 & B_{2}\\
		\end{pmatrix}: H_{1}\oplus K_{1}\oplus H_{2}\oplus K_{2}\rightarrow H_{1}\oplus K_{1}\oplus H_{2}\oplus K_{2}$ is generalized Drazin-Riesz invertible.
	
		\end{proof}
	\begin{corollary}
		\begin{enumerate}
			\item[(i)] Let $A\in \mathcal{B}(X)$ and $B\in \mathcal{B}(Y)$. Then:
			$$\mbox{acc}\sigma_{lb}(A)\cup \mbox{acc}\sigma_{rb}(B)\cup \mathcal{W}\subset \bigcap_{C\in \mathcal{B}(Y,X)}\sigma_{gDR}(M_{C}),$$
			where $\mathcal{W}=\{\lambda\in \mathbb{C}:\nexists \delta>0 \mbox{ such that } \alpha(A-\lambda I-\lambda'I)+\alpha(B-\lambda I-\lambda'I)=\beta(A-\lambda I-\lambda'I)+\beta(B-\lambda I-\lambda'I) \mbox{ for each } 0<|\lambda'|<\delta\}.$
			\item[(ii)] Let $A\in \mathcal{B}(H)$ and $B\in \mathcal{B}(K)$. Then:
			\begin{align*}
			\mbox{acc}\sigma_{b_{+}}(A)& \cup \mbox{acc}\sigma_{b_{-}}(B)\cup \mathcal{W}  \subset \bigcap_{C\in \mathcal{B}(K,H)}\sigma_{gDR}(M_{C})\\
			& \subset \mbox{acc}\sigma_{b_{+}}(A)\cup \mbox{acc}\sigma_{b_{-}}(B)\cup \mathcal{W}\cup\sigma_{gKR}(A) \cup\sigma_{gKR}(B).
			\end{align*}
			In particular, if $\sigma_{gKR}(A)\subseteq \mbox{acc}\sigma_{b_{+}}(A)$ and $\sigma_{gKR}(B)\subseteq \mbox{acc}\sigma_{b_{-}}(B)$ then
			$$\bigcap_{C\in \mathcal{B}(K,H)}\sigma_{gDR}(M_{C})=\mbox{acc}\sigma_{b_{+}}(A)\cup \mbox{acc}\sigma_{b_{-}}(B)\cup \mathcal{W}.$$
		\end{enumerate}
	\end{corollary}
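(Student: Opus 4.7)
The plan is to obtain both inclusions by a contrapositive argument, pulling back the condition on $\lambda_0$ to the shifted operators $A-\lambda_0 I$ and $B-\lambda_0 I$, then invoking Lemma~\ref{L2} or Theorem~\ref{Th1}. The central observation is that for every $\lambda_0\in\mathbb{C}$ the matrix $M_{C}-\lambda_0 I$ is again an upper triangular operator matrix with the same off-diagonal entry $C$ and diagonal entries $A-\lambda_0 I$, $B-\lambda_0 I$, so the previous results apply verbatim after translation.

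For part (i), I would fix $\lambda_0\notin\bigcap_{C}\sigma_{gDR}(M_{C})$ and choose $C\in\mathcal{B}(Y,X)$ with $M_{C}-\lambda_0 I$ generalized Drazin-Riesz invertible. Applying Lemma~\ref{L2} to the matrix with diagonal entries $A-\lambda_0 I$ and $B-\lambda_0 I$ yields $0\notin\mbox{acc}\,\sigma_{lb}(A-\lambda_0 I)$ and $0\notin\mbox{acc}\,\sigma_{rb}(B-\lambda_0 I)$, which, after the obvious shift of spectra, say that $\lambda_0\notin\mbox{acc}\,\sigma_{lb}(A)\cup\mbox{acc}\,\sigma_{rb}(B)$. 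The kernel/range condition furnished by part~(iii) of Lemma~\ref{L2}, translated similarly, is precisely the statement $\lambda_0\notin\mathcal{W}$.

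For the first inclusion in part (ii), the same argument applies, since in the Hilbert space setting every closed subspace is complemented, so $\Phi_{le}(H)=\Phi_{+}(H)$ and $\Phi_{re}(K)=\Phi_{-}(K)$, forcing $\sigma_{lb}(A)=\sigma_{b_{+}}(A)$ and $\sigma_{rb}(B)=\sigma_{b_{-}}(B)$. For the second inclusion, fix $\lambda_0$ outside the right-hand side. Then $A-\lambda_0 I$ and $B-\lambda_0 I$ both admit a GKRD, $0\notin\mbox{acc}\,\sigma_{b_{+}}(A-\lambda_0 I)$, $0\notin\mbox{acc}\,\sigma_{b_{-}}(B-\lambda_0 I)$, and the translated punctured-neighborhood kernel/range equality holds near $0$. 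These are exactly the four hypotheses of Theorem~\ref{Th1} for $A-\lambda_0 I$ and $B-\lambda_0 I$, hence some $C'\in\mathcal{B}(K,H)$ makes $\begin{pmatrix} A-\lambda_0 I & C' \\ 0 & B-\lambda_0 I\end{pmatrix}=M_{C'}-\lambda_0 I$ generalized Drazin-Riesz invertible, so $\lambda_0\notin\sigma_{gDR}(M_{C'})\supseteq\bigcap_{C}\sigma_{gDR}(M_{C})$. The ``in particular'' clause is then immediate, since the hypotheses $\sigma_{gKR}(A)\subseteq\mbox{acc}\,\sigma_{b_{+}}(A)$ and $\sigma_{gKR}(B)\subseteq\mbox{acc}\,\sigma_{b_{-}}(B)$ absorb the residual sets on the right into those already present on the left.

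I do not expect a substantive obstacle: both inclusions are essentially bookkeeping translations of Lemma~\ref{L2} and Theorem~\ref{Th1}. The only points requiring mild care are the Hilbert-space identifications $\sigma_{lb}=\sigma_{b_{+}}$ and $\sigma_{rb}=\sigma_{b_{-}}$ needed in part (ii), and the consistent propagation of the shift $\lambda_0$ through each of the four conditions.
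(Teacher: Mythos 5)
Your proof is correct and follows exactly the route the paper intends: its entire proof of this corollary is the single line ``Due to lemma \ref{L2} and theorem \ref{Th1}'', and your contrapositive translation argument (Lemma \ref{L2} applied to $M_{C}-\lambda_0 I$ for the lower inclusions, Theorem \ref{Th1} applied to $A-\lambda_0 I$, $B-\lambda_0 I$ for the upper inclusion, plus the Hilbert-space identifications $\sigma_{lb}=\sigma_{b_{+}}$ and $\sigma_{rb}=\sigma_{b_{-}}$) is precisely the bookkeeping the authors leave implicit.
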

	
	\begin{proof}
		Due to lemma \ref{L2} and theorem \ref{Th1} .
		\end{proof}
		
		\begin{remark}\label{rek}
			Let $T\in \mathcal{B}(X)$
			\begin{enumerate}
				\item[(a)] If $T$ is either upper semi-Browder or lower semi-Browder then $T$ is essentially Kato and hence admits a GKRD. Consequently, the following inclusions hold:
				$$\sigma_{gKR}(T)\subseteq \sigma_{eK}(T)\subseteq \sigma_{b_{+}}(T)\cap\sigma_{b_{-}}(T).$$ It follows that $\sigma_{gKR}(T)\subseteq  acc\sigma_{b_{+}}(T)\cap acc\sigma_{b_{-}}(T)$ when $\sigma_{b_{+}}(T)= acc\sigma_{b_{+}}(T)$ and $\sigma_{b_{-}}(T)= acc \sigma_{b_{-}}(T)$.
				\item[(b)] If $T$ is polynomially Riesz then $\sigma_{gKR}(T)=acc\sigma_{b_{+}}(T)= acc\sigma_{b_{-}}(T)=\emptyset$.
			\end{enumerate}
		\end{remark}
		The following example is coted from \cite{mi}, it is also valid to build a particular operator matrix who satisfies the conditions of theorem \ref{Th1} .
		\begin{example}
			$U$ and $V$ are the forward and the backward unilateral shifts on $l^{2}(\mathbb{N})$, we have:
			\begin{align*}
				\sigma_{b_{+}}(U) & = acc \sigma_{b_{+}}(U)= \partial \mathbb{D},\\
				\sigma_{b_{-}}(V) & = acc \sigma_{b_{-}}(V)= \partial \mathbb{D},
				\end{align*}
				where $\mathbb{D}=\{\lambda\in \mathbb{C}: |\lambda|\leqslant 1\}$. It is known that $U$ is upper semi-Browder, $V$ is lower semi-Browder with $\mbox{asc}(U)=\mbox{des}(V)=1$, $\mbox{ind}(U)=-1$ and $\mbox{ind}(V)=1$. Following remark \ref{rek}, $U$ and $V$ admit a GKRD.\\
				Now let $W\in \mathcal{B}(l^{2}(\mathbb{N})\oplus l^{2}(\mathbb{N}))$ defined by $W=\begin{pmatrix}
				V & 0 \\
				0 & 0 \\
				\end{pmatrix}$. Then $\sigma_{b_{-}}(W)=\partial \mathbb{D} \cup \{0\}$ and $0\notin acc \sigma_{b_{-}}(W)$, but $W$ admits a GKRD.\\
				On the other hand, from lemma \ref{pt} there exists $\delta>0$ such that $\mbox{ind}(U-\lambda I)=\mbox{ind}(U)$ and $\mbox{ind}(V-\lambda I)=\mbox{ind}(V)$ for $0<|\lambda|<\delta$, then:
				$$\mbox{ind}(U-\lambda I)=\mbox{ind}(U)=-\mbox{ind}(V)=-\mbox{ind}(V-\lambda I)=-\mbox{ind}(W-\lambda I),$$
				for every $0<|\lambda|<\delta$. Thus by theorem \ref{Th1} there exists $C\in \mathcal{B}(l^{2}(\mathbb{N})\oplus l^{2}(\mathbb{N}),l^{2}(\mathbb{N}))$ such that:
				$$
				M_C=\begin{pmatrix}
				U & C \\
				0 & W \\
				\end{pmatrix}
				: l^{2}\oplus l^{2}\oplus l^{2}\rightarrow l^{2}\oplus l^{2}\oplus l^{2},
				$$
				is generalized Drazin-Riesz invertible, i.e. $0\notin \bigcap_{C\in \mathcal{B}(l^{2}(\mathbb{N})\oplus l^{2}(\mathbb{N}),l^{2}(\mathbb{N}))}\sigma_{gDR}(M_{C})$.
		\end{example}
	
	In connection with Theorem \ref{Th1}, it is straightforward to obtain the following result.
	
	\begin{proposition}
		Let $A\in \mathcal{B}(H)$ and $B\in \mathcal{B}(K)$ such that the following statments hold:
		\begin{enumerate}
			\item[(i)] $A$ is generalized Drazin-Riesz bounded below,
			\item[(ii)] $B$ is generalized Drazin-Riesz surjective,
			\item[(iii)] there exists $\delta>0$ such that $\alpha(\lambda I-A)+\alpha (\lambda I-B)=\beta(\lambda I-A)+ \beta(\lambda I-B)$ for every $0<|\lambda|<\delta$.
		\end{enumerate}
		Then there exists $C\in \mathcal{B}(K,H)$ such that $M_{C}$ is generalized Drazin-Riesz invertible.
	\end{proposition}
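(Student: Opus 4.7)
The plan is to reduce the proposition directly to Theorem \ref{Th1}. The three hypotheses here are essentially a repackaging of hypotheses (i)--(iv) of that theorem, so the strategy is to translate each assumption and then invoke Theorem \ref{Th1} as a black box. In particular I do not expect to need any new spectral computation.

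First I would unpack hypothesis (i). Since $A \in gDRM(H)$, there exists $(H_1,H_2)\in \mbox{Red}(A)$ with $A_{H_1}$ bounded below and $A_{H_2}$ Riesz. A bounded below operator satisfies $\alpha(A_{H_1})=0$ and $\mbox{asc}(A_{H_1})=0$ with closed range, so it is upper semi-Fredholm with finite ascent, i.e.\ upper semi-Browder. Thus $A = A_{H_1}\oplus A_{H_2}$ exhibits $A$ as a direct sum of an upper semi-Browder and a Riesz operator, and Lemma \ref{mn}(a) then delivers simultaneously that $A$ admits a GKRD and that $0\notin \mbox{acc}\,\sigma_{b_{+}}(A)$.

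Symmetrically, from hypothesis (ii), writing $B = B_{K_1}\oplus B_{K_2}$ with $B_{K_1}$ surjective and $B_{K_2}$ Riesz, the surjectivity of $B_{K_1}$ gives $\beta(B_{K_1})=0$ and $\mbox{des}(B_{K_1})=0$, so $B_{K_1}$ is lower semi-Browder. Lemma \ref{mn}(b) then provides that $B$ admits a GKRD and $0\notin \mbox{acc}\,\sigma_{b_{-}}(B)$. Hypothesis (iii) of the proposition is literally hypothesis (iv) of Theorem \ref{Th1}, so all four conditions of Theorem \ref{Th1} are now in place, and applying it produces $C\in \mathcal{B}(K,H)$ making $M_C$ generalized Drazin-Riesz invertible.

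Honestly, there is no real obstacle; the only point where one must pause is to record that bounded below $\Rightarrow$ upper semi-Browder and surjective $\Rightarrow$ lower semi-Browder (both trivially, from $\alpha=\mbox{asc}=0$ and $\beta=\mbox{des}=0$ respectively). This explains the authors' remark that the result follows straightforwardly from Theorem \ref{Th1}.
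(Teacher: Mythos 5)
Your proof is correct and follows essentially the same route as the paper: both reduce the statement to Theorem \ref{Th1} by translating the hypotheses ``generalized Drazin-Riesz bounded below/surjective'' into the existence of a GKRD together with $0\notin \mbox{acc}\,\sigma_{b_{+}}(A)$ and $0\notin \mbox{acc}\,\sigma_{b_{-}}(B)$ (the paper cites the corresponding equivalences from the \v{Z}ivkovi\'{c}-Zlatanovi\'{c}--Cvetkovi\'{c} paper, whereas you derive them by noting bounded below $\Rightarrow$ upper semi-Browder, surjective $\Rightarrow$ lower semi-Browder, and applying Lemma \ref{mn}). The only cosmetic difference is that the paper re-enters the proof of Theorem \ref{Th1} at the decomposition stage rather than invoking the theorem as a black box; the content is identical.
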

	
	\begin{proof}
		Because of \cite[Theorem 2.4]{szm} and \cite[Theorem 2.5]{szm} the following equivalences hold:
	\begin{align*}
		& A\in gDRM(H) \mbox{ and } B\in gDRQ(K)\\
		& \Longleftrightarrow A \mbox{ and } B \mbox{ both admit a GKRD}, 0\notin \mbox{acc}\sigma_{b_{+}}(T)
		\mbox{ and } 0\notin \mbox{acc}\sigma_{b_{-}}(T);\\
		& \Longleftrightarrow \mbox{ there exist } (H_{1},H_{2})\in\mbox{Red}(A), (K_{1},K_{2})\in\mbox{Red}(B)
		\mbox{ such that } A_{1}=A_{H_{1}} \mbox{is upper}\\
		& \mbox{ semi-Browder}, B_{1}=A_{K_{1}} \mbox{is lower semi-Browder},
		A_{2}=A_{H_{2}} \mbox{ and } B_{2}=A_{K_{2}}
		\mbox{are Riesz.}
		\end{align*}	
		Therefore, by the proof of Theorem \ref{Th1} there exists $C\in \mathcal{B}(K,H)$ such that: $$M_{C}=(M_{C})_{H_{1}\oplus K_{1}}\oplus(M_{C})_{H_{2}\oplus K_{2}}.$$
		$$\mbox{ Where, } (M_{C})_{H_{1}\oplus K_{1}} \mbox{ is Browder and }(M_{C})_{H_{2}\oplus K_{2}} \mbox{ is Riesz }.$$
		As a result, $M_{C}$ is generalized Drazin-Riesz invertible.

		\end{proof}

	Let $K\subseteq \mathbb{C}$ be a compact set. Denote by $\eta K$ the connected hull of $K$, where the complement $\mathbb{C}\backslash \eta K$ is the unique unbounded component of the complement $\mathbb{C}\backslash K$ (\cite[Definition 7.10.1]{his}), a hole of $K$ is a component of $\eta K\backslash K$.\\
	If $H,K\subseteq \mathbb{C}$ are compact subsets, we get (\cite[Theorem 7.10.3]{his}):
	 $$\partial H\subseteq K\subseteq H \Longrightarrow \partial H\subseteq \partial K \subseteq K \subseteq H \subseteq \eta K= \eta H.$$
	 For instance, if $K\subseteq \mathbb{C}$ is finite then $\eta K= K$. Moreover the following implication holds:	
	 $$ \eta K= \eta H \Longrightarrow (\mbox{H finite} \Leftrightarrow \mbox{K finite}), $$
	and in this case $H=K$.

		\begin{theorem}
			For $A\in \mathcal{B}(X)$, $B\in \mathcal{B}(Y)$ and $C\in \mathcal{B}(Y,X)$. Then the following statements are equivalent:
			\begin{enumerate}
				\item[(i)] $\sigma_{gKR}(M_{C})=\emptyset$,
				\item[(ii)] $\sigma_{gDR}(M_{C})=\emptyset$,
				\item[(iii)] $A$ and $B$ are polynomially Riesz,
				\item[(iv)] $\sigma_{b}(M_{C})$ is finite.
			\end{enumerate}
		\end{theorem}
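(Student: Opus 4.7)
The plan is to organize the proof as a cycle around the finiteness of $\sigma_b(M_C)$. My route will be $(iii)\Leftrightarrow(iv)\Rightarrow(ii)\Rightarrow(i)$, closing with the hardest link $(i)\Rightarrow(iv)$. The engine throughout is the identity $\sigma_{gDR}(T)=\sigma_{gKR}(T)\cup\mbox{acc}\,\sigma_b(T)$ together with the Browder spectral mapping theorem $\sigma_b(p(T))=p(\sigma_b(T))$ and the standard filling-in-the-holes relation for the Browder spectrum of an upper triangular operator matrix.

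For $(iii)\Rightarrow(iv)$, if $p(A)$ is Riesz then $\sigma_b(p(A))\subseteq\{0\}$, so the spectral mapping theorem forces $\sigma_b(A)$ into the finite set $p^{-1}(\{0\})$; the analogous statement for $B$ combined with the standard inclusion $\sigma_b(M_C)\subseteq\sigma_b(A)\cup\sigma_b(B)$ (valid because Browderness of both diagonal blocks, plus invertibility in a punctured neighborhood, forces $M_C-\lambda I$ Browder) yields finiteness of $\sigma_b(M_C)$. For $(iv)\Rightarrow(iii)$, I would invoke the filling-in-the-holes identity $\sigma_b(A)\cup\sigma_b(B)=\sigma_b(M_C)\cup W$, where $W$ is a union of holes of $\sigma_b(M_C)$; a finite compact set has no holes, so $\sigma_b(A)$ and $\sigma_b(B)$ are themselves finite, and the polynomial $p(z)=\prod_{\lambda\in\sigma_b(A)}(z-\lambda)$ satisfies $\sigma_b(p(A))=\{0\}$, whence $p(A)-\mu I$ is Browder (hence Fredholm of index zero) for every $\mu\neq 0$, making $p(A)$ Riesz, and similarly for $B$. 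Applying exactly the same polynomial construction to $M_C$ in place of $A$ shows directly that $(iv)$ makes $M_C$ polynomially Riesz, after which Remark~\ref{rek}(b) gives $\sigma_{gKR}(M_C)=\emptyset$ while finiteness of $\sigma_b(M_C)$ gives $\mbox{acc}\,\sigma_b(M_C)=\emptyset$; the union formula for $\sigma_{gDR}$ then delivers $(iv)\Rightarrow(ii)$. Finally $(ii)\Rightarrow(i)$ is immediate from $\sigma_{gKR}(M_C)\subseteq\sigma_{gDR}(M_C)$.

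The main obstacle is $(i)\Rightarrow(iv)$: passing from emptiness of $\sigma_{gKR}(M_C)$ back to finiteness of $\sigma_b(M_C)$. My approach is to prove the stronger statement that, for any $T\in\mathcal{B}(X)$, $\sigma_{gKR}(T)=\emptyset$ forces $T$ to be polynomially Riesz; this is the converse direction of Remark~\ref{rek}(b) and is available in \cite{szm}. The argument I have in mind is the following: given $\lambda\in\sigma_b(T)$, use $\sigma_{gKR}(T)=\emptyset$ to decompose $T-\lambda I=S_{\lambda}\oplus R_{\lambda}$ with $S_{\lambda}$ Kato and $R_{\lambda}$ Riesz; stability of the Kato class under small perturbations, together with the fact that $\sigma(R_{\lambda})\setminus\{0\}$ consists of isolated poles accumulating only at $0$, produces a punctured neighborhood of $\lambda$ in which $T-\mu I$ has Kato first summand and invertible second summand, and a careful index count via Lemma~\ref{pt} upgrades the Kato summand to Browder on that punctured neighborhood. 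Hence every point of $\sigma_b(T)$ is isolated in $\sigma_b(T)$, and since $\sigma_b(T)$ is compact, it must be finite. Specializing to $T=M_C$ closes the cycle and completes the proof.
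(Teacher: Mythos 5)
Most of your cycle is sound, and your route through $(iv)\Rightarrow(ii)$ is actually cleaner than the paper's: the paper proves $(iii)\Rightarrow(ii)$ by decomposing $X$ and $Y$ via \cite[Theorem 2.13]{sddh} and running a two-case analysis on $n$ versus $m$, whereas you simply observe that finiteness of $\sigma_{b}(M_{C})$ makes $M_{C}$ itself polynomially Riesz (by \cite[Theorem 2.3]{sddh} applied to $M_{C}$), so that Remark \ref{rek}(b) kills $\sigma_{gKR}(M_{C})$ and finiteness kills $\mbox{acc}\,\sigma_{b}(M_{C})$. Your $(iii)\Leftrightarrow(iv)$ matches the paper's use of $\eta\sigma_{b}(M_{C})=\eta(\sigma_{b}(A)\cup\sigma_{b}(B))$, and $(ii)\Rightarrow(i)$ is immediate.

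The gap is in your closing link $(i)\Rightarrow(iv)$. The pointwise argument you sketch --- take $\lambda\in\sigma_{b}(T)$, write $T-\lambda I=S_{\lambda}\oplus R_{\lambda}$ with $S_{\lambda}$ Kato and $R_{\lambda}$ Riesz, perturb, and ``upgrade the Kato summand to Browder by an index count'' --- cannot work. A Kato operator need not be semi-Fredholm, so Lemma \ref{pt} does not even apply to it; and when it is semi-Fredholm its index is generically nonzero, so no count upgrades it to Browder. Concretely, for the unilateral shift $U$ and any $\mu$ in the open unit disc, $U-\mu I$ is bounded below, hence Kato, hence trivially admits a GKRD with $N=\{0\}$ and the second summand invertible; yet $\mbox{ind}(U-\mu I)=-1$, so $\mu\in\mbox{acc}\,\sigma_{b}(U)$. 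Thus ``$T-\lambda I$ admits a GKRD'' never implies ``$\lambda$ is isolated in $\sigma_{b}(T)$'' by a local argument: the hypothesis $\sigma_{gKR}(T)=\emptyset$ is global, and the implication genuinely requires the connected-hull theorem $\eta\sigma_{gKR}(T)=\eta\sigma_{gDR}(T)$ of \cite[Theorem 3.10]{szm} (a filling-the-holes argument), which is exactly what the paper invokes for its $(i)\Leftrightarrow(ii)$. Your fallback citation to \cite{szm} for the statement ``$\sigma_{gKR}(T)=\emptyset$ forces $T$ polynomially Riesz'' would rescue the proof, but the argument you actually offer in its place is false and should be replaced by that citation (or by the hull identity plus your own $(ii)\Rightarrow(iv)\Rightarrow(iii)$ chain).
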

		
		\begin{proof}
				$(i)\Leftrightarrow (ii)$ From \cite[Theorem 3.10]{szm}, it follows that $\eta \sigma_{gKR}(M_{C})= \eta \sigma_{gDR}(M_{C})$. This implies that $\sigma_{gKR}(M_{C})=\emptyset$ if and only if $\sigma_{gDR}(M_{C})=\emptyset$.\\
				$(ii)\Rightarrow (iv)$ assume that $\sigma_{gDR}(M_{C})=\emptyset$, since $\sigma_{gDR}(M_{C})=\sigma_{gKR}(M_{C})\cup \mbox{acc}\sigma_{b}(M_{C})$. then $\mbox{acc}\sigma_{b}(M_{C})=\emptyset$. Consequently, $\sigma_{b}(M_{C})$ is a finite set.\\
				$(iii)\Rightarrow (ii)$ Let $A$ and $B$ be polynomially Riesz, then $\sigma_{b}(A)=\pi_{A}^{-1}(0)=\{\lambda_{1},\lambda_{2},...,\lambda_{n}\}$ and $\sigma_{b}(B)=\pi_{B}^{-1}(0)=\{\mu_{1},\mu_{2},...,\mu_{m}\}$ where $\pi_{A}$ and $\pi_{B}$ are the minimal polynomials of $A$ and $B$. According to the proof of \cite[Theorem 2.6]{zzl} we get, $$\eta \sigma_{b}(M_{C})= \eta (\sigma_{b}(A)\cup \sigma_{b}(B)).$$
				Also, it is known that $ \sigma_{b}(M_{C})\subset \sigma_{b}(A)\cup \sigma_{b}(B)$. Since $ \sigma_{b}(M_{C})$ and $\sigma_{b}(A)\cup \sigma_{b}(B)$ are finite then, $ \sigma_{b}(M_{C})= \sigma_{b}(A)\cup \sigma_{b}(B)=\pi_{A}^{-1}(0)\cup \pi_{B}^{-1}(0)=\{\lambda_{1},\lambda_{2},...,\lambda_{n},\mu_{1},\mu_{2},...,\mu_{m}\}.$ Thus, for every $\lambda\notin \pi_{A}^{-1}(0)\cup \pi_{B}^{-1}(0)$, $M_{C}-\lambda$ is Browder and hence generalized Drazin-Riesz invertible.\\
				Now from \cite[Theorem 2.13]{sddh} the Banach spaces $X$ and $Y$ are decomposed into the direct sums $X=X_{1}\oplus X_{2}\oplus ... \oplus X_{n},$ $Y=Y_{1}\oplus Y_{2}\oplus ... \oplus Y_{m}$ where $X_{i}$ is closed $A$-invariant subspace of $X$ (\textit{resp}, $Y_{j}$ is closed $B$-invariant subspace of $Y$). $A=A_{1}\oplus A_{2} \oplus ... \oplus A_{n}$ and $B=B_{1}\oplus B_{2} \oplus ... \oplus B_{m}$, $A_{i}=A_{X_{i}}$ and $B_{j}=B_{Y_{j}}$.\\
				$A_{i}-\lambda_{i}$ and $B_{j}-\mu_{j}$ are Riesz which implies that $\sigma_{b}(A_{i})\subset\{\lambda_{i}\}$ and $\sigma_{b}(B_{j})\subset\{\mu_{j}\}$ for i=1,...,n ; j=1,...,m.\\
				\textbf{Case 1:} If $n=m$, then $A_{i}-\lambda_{j}$ and $B_{i}-\mu_{j}$ are Browder for every $i\neq j$, $i,j \in \{1,...,n\}.$ It is clear that $X_{1}\oplus Y_{1}$, $X_{2}\oplus Y_{2}$, ... ,$X_{n}\oplus Y_{n}$ are closed subspace of $X\oplus Y$ invariant by $M_{C}$. Moreover, $\sigma_{b}({M_{C}}_{X_{i}\oplus Y_{i}})=\sigma_{b}(A_{i})\cup \sigma_{b}(B_{i})\subset\{\lambda_{i},\mu_{i}\}$ for $i=1,...,n$. Using the decomposition:
				$$M_{C}-\lambda_{1}={M_{C}}_{X_{1}\oplus Y_{1}}-\lambda_{1}\oplus {M_{C}}_{X_{2}\oplus Y_{2}}-\lambda_{1}\oplus...\oplus{M_{C}}_{X_{n}\oplus Y_{n}}-\lambda_{1}.$$
				We have $M_{C}-\lambda_{1}$ is Browder, because ${M_{C}}_{X_{i}\oplus Y_{i}}-\lambda_{1}$ are Browder for every $i\in\{1,2,...,n\}$. It follows that $M_{C}-\lambda_{1}$ is generalized Drazin-Riesz invertible. In the same way, we get that $M_{C}-\lambda_{i}$ and $M_{C}-\mu_{i}$ are generalized Drazin-Riesz invertible for every $i\in \{1,...,n\}$.\\
				\textbf{Case 2:} If $n\neq m$ we will consider only the case where $n>m$ since the argument of the case $n<m$ is similar. So, $M_{C}-\lambda_{i}$ and $M_{C}-\mu_{i}$ are Browder thus, $M_{C}-\lambda_{i}$ and $M_{C}-\mu_{i}$ are generalized Drazin-Riesz invertible for every $i\in \{1,...,m\}$.
				For $i=m+1$, we consider the decomposition:
				\begin{align*}
					M_{C}-\lambda_{m+1}={M_{C}}_{X_{1}\oplus Y_{1}}-\lambda_{m+1}& \oplus {M_{C}}_{X_{2}\oplus Y_{2}}-\lambda_{m+1}\oplus...\oplus{M_{C}}_{X_{m}\oplus Y_{m}}-\lambda_{m+1} \\
					& \oplus{M_{C}}_{X_{m+1}}-\lambda_{m+1}\oplus ...\oplus{M_{C}}_{X_{n}}-\lambda_{m+1} .
					\end{align*}
			   Since ${M_{C}}_{X_{m+1}}-\lambda_{m+1}$ is Riesz and ${M_{C}}_{X_{1}\oplus Y_{1}}-\lambda_{m+1} \oplus {M_{C}}_{X_{2}\oplus Y_{2}}-\lambda_{m+1}\oplus...\oplus{M_{C}}_{X_{m}\oplus Y_{m}}-\lambda_{m+1}
			    \oplus{M_{C}}_{X_{m+2}}-\lambda_{m+1}\oplus ...\oplus{M_{C}}_{X_{n}}-\lambda_{m+1}$
				is Browder. As a result, $M_{C}-\lambda_{m+1}$ is generalized Drazin-Riesz invertible. Similarly, we can prove that $M_{C}-\lambda_{k}$ is generalized Drazin-Riesz invertible for every $k\in\{m+2,...,n\}$.\\
				Which shows that $M_{C}-\lambda$ is generalized Drazin-Riesz invertible for every $\lambda\in \mathbb{C}$. consequently, $\sigma_{gDR}(M_{C})=\emptyset$.\\
				$(iii)\Rightarrow(iv)$ obvious.\\
				$(iv)\Rightarrow(iii)$ suppose that $\sigma_{b}(M_{C})$ is finite, we have
				$$\sigma_{b}(M_{C})\subseteq\sigma_{b}(A)\cup\sigma_{b}(B)\subseteq\eta(\sigma_{b}(A)\cup\sigma_{b}(B))=\eta(\sigma_{b}(M_{C}))=\sigma_{b}(M_{C}).$$
				Thus $\sigma_{b}(M_{C})=\sigma_{b}(A)\cup\sigma_{b}(B)$ which implies that $\sigma_{b}(A)$and $\sigma_{b}(B)$ are finite. Then according to \cite[Theorem 2.3]{sddh} $A$ and $B$ are polynomially Riesz.
		
			\end{proof}
			
			\begin{lemma}\label{L1}
				Let $T\in \mathcal{B}(X)$.
				\begin{enumerate}
					\item[(i)] $T$ is essentially Kato and $0\notin \mbox{acc}\sigma_{b_{+}}(T)$ if and only if $T$ is upper semi-Browder.
					\item[(ii)] $T$ is essentially Kato and $0\notin \mbox{acc}\sigma_{b_{-}}(T)$ if and only if $T$ is lower semi-Browder.	
				\end{enumerate}
			\end{lemma}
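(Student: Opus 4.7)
The plan is to prove (i); statement (ii) follows by an analogous argument with $\alpha$, ascent, and bounded below replaced by $\beta$, descent, and surjective. For the easy direction $(\Leftarrow)$, an upper semi-Browder operator is in particular upper semi-Fredholm, and the introduction records that every semi-Fredholm operator is essentially Kato; moreover the upper semi-Browder class is stable under small perturbations $T-\lambda I$, so on some punctured neighborhood of $0$ the operator $T-\lambda I$ is upper semi-Browder, which gives $0\notin\mbox{acc}\sigma_{b_{+}}(T)$.

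For the main direction $(\Rightarrow)$, I use the essentially Kato decomposition $T=T_{M}\oplus T_{N}$ with $T_{M}$ Kato on $M$ and $T_{N}$ nilpotent on a finite-dimensional subspace $N$. Since every operator on a finite-dimensional space is Browder, $\sigma_{b_{+}}(T_{N})=\emptyset$, so $\sigma_{b_{+}}(T)=\sigma_{b_{+}}(T_{M})$ and the hypothesis passes to $T_{M}$: it is Kato with $0\notin\mbox{acc}\sigma_{b_{+}}(T_{M})$. Since $T_{N}$ is already Browder, $T=T_{M}\oplus T_{N}$ will be upper semi-Browder as soon as $T_{M}$ is, and because the Kato property supplies closed range, it suffices to prove $\alpha(T_{M})=0$.

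The crucial input is Kato's stability theorem, which in this context says that for Kato $T_{M}$ there exists $\varepsilon>0$ with $T_{M}-\lambda I$ Kato and $\alpha(T_{M}-\lambda I)=\alpha(T_{M})$ for every $|\lambda|<\varepsilon$ (this rests on the identity $\alpha(T_{M}-\lambda I)=\dim(\mathcal{N}(T_{M})\cap\mathcal{R}(T_{M}^{\infty}))$ together with the Kato inclusion $\mathcal{N}(T_{M})\subseteq\mathcal{R}(T_{M}^{\infty})$). This is paired with the elementary observation that a Kato operator $S$ with $\alpha(S)\geq 1$ has $\mbox{asc}(S)=\infty$: for $0\neq x\in\mathcal{N}(S)$, the Kato inclusion $\mathcal{N}(S)\subseteq\mathcal{R}(S^{n})$ supplies $y_{n}$ with $x=S^{n}y_{n}$, and then $y_{n}\in\mathcal{N}(S^{n+1})\setminus\mathcal{N}(S^{n})$. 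Combining the two, if $\alpha(T_{M})\geq 1$ were the case, every $T_{M}-\lambda I$ with $0<|\lambda|<\varepsilon$ would be Kato with $\alpha\geq 1$ and hence infinite ascent, in particular not upper semi-Browder, so the entire punctured disk would lie in $\sigma_{b_{+}}(T_{M})$, contradicting $0\notin\mbox{acc}\sigma_{b_{+}}(T_{M})$. Thus $\alpha(T_{M})=0$, $T_{M}$ is bounded below, and $T$ is upper semi-Browder. The only nontrivial ingredient is the Kato-stability/constancy-of-$\alpha$ step; everything else is direct-sum bookkeeping, and the same scheme (with the dual fact that a Kato $S$ with $\beta(S)\geq 1$ has $\mbox{des}(S)=\infty$) yields (ii).
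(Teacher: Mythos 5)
Your proof is correct, but it routes the hard direction through different machinery than the paper. The paper first perturbs the whole operator: it invokes Jiang--Zhong to get that $T-\lambda I$ is Kato for small $\lambda\neq 0$, combines this with the hypothesis $0\notin\mbox{acc}\,\sigma_{b_{+}}(T)$ and M\"uller's Lemma 20.9 (Kato $+$ upper semi-Browder $\Rightarrow$ bounded below) to conclude $0\notin\mbox{acc}\,\sigma_{ap}(T)$, and only then passes to the decomposition $T=T_{M}\oplus T_{N}$, finishing with the Aiena--Monsalve theorem ($T_{M}$ Kato and $0\notin\mbox{acc}\,\sigma_{ap}(T_{M})$ imply $T_{M}$ bounded below). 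You instead decompose first, reduce everything to showing $\alpha(T_{M})=0$, and get it from the constancy of $\lambda\mapsto\alpha(T_{M}-\lambda I)$ on the Kato resolvent together with your elementary observation that a non-injective Kato operator has infinite ascent --- which is in substance the content of the paper's Lemma 20.9 citation, here proved by hand. Your version is more self-contained (it avoids the $\sigma_{ap}$ detour and two of the three external theorems) at the price of leaning on the Kato stability/constancy-of-$\alpha$ result; note only that the identity $\alpha(T-\lambda I)=\dim\bigl(\mathcal{N}(T)\cap\mathcal{R}(T^{\infty})\bigr)$ you quote is classically stated for semi-Fredholm operators, so for a Kato $T_{M}$ with infinite-dimensional kernel you should instead cite the general semi-regular stability theorem (or observe that $T_{M}$ restricted to $\mathcal{R}(T_{M}^{\infty})$ is surjective and non-injective, a property stable under small perturbation); a Kato operator with finite-dimensional kernel is automatically upper semi-Fredholm, so that case is covered by the classical statement. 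Your treatment of the converse and of part (ii) by the dual argument (descent and $\beta$ in place of ascent and $\alpha$) is also fine; the paper handles (ii) by passing to the adjoint $T^{*}$, which amounts to the same thing.
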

			
			\begin{proof}
				$(i)$ If $T$ is upper semi Browder then $T$ is essentially Kato. Since $0\notin \sigma_{b_{_{+}}}(T)$ then $0\notin \mbox{acc}\sigma_{b_{+}}(T)$.\\
				Conversely, Suppose that $T$ is essentially Kato and $0\notin \mbox{acc}\sigma_{b_{+}}(T)$ then, by \cite[Theorem 2.2]{qh}, there exists $\delta>0$ such that $T-\lambda$ is Kato for each $\lambda$ such that $0<|\lambda|<\delta$. Since $0\notin \mbox{acc}\sigma_{b_{+}}(T)$ there exists $\delta'>0$ such that $T-\lambda$ is upper semi-Browder for every $0<|\lambda|<\delta'$. For the seek of simplicity we tend to assume that $\delta=\delta'$. Hence, from \cite[Lemma 20.9]{vm}, $T-\lambda$ is bounded below for every $0<|\lambda|<\delta$, which means that $0\notin \mbox{acc}\sigma_{ap}(T)$. Moreover, let $(M,N)\in \mbox{Red}(T)$ where $T_{M}$ is Kato, $T_{N}$ is nilpotent and $N$ is finite dimensional. Clearly,
				$$\sigma_{ap}(T)=\sigma_{ap}(T_{M})\cup \sigma_{ap}(T_{N})=\sigma_{ap}(T_{M})\cup \{0\}.$$
				As $0\notin \mbox{acc}\sigma_{ap}(T)$, it follows that $0\notin \mbox{acc}\sigma_{ap}(T_{M})$ also $T_{M}$ is Kato then according to \cite[theorem 2.14]{am} $T_{M}$ is bounded below. Altogether, $T$ is upper semi-Browder \cite[Theorem 20.10]{vm}.\\
				$(ii)$ Suppose that $T$ is essentially Kato then, by \cite[Theorem 21.5]{vm}, $T^{*}$ is essentially Kato. Also, we have $0\notin \mbox{acc}\sigma_{b_{-}}(T)=\mbox{acc}\sigma_{b_{+}}(T^{*})$ and hence, by the first part $T^{*}$ is upper semi-Browder, or equivalently $T$ is lower semi-Browder.\\
				The reverse implication is obvious.
			\end{proof}
		
		Under further hypothesis the converse of theorem \ref{Th1} is also hold.
		
		\begin{proposition}
			Let $A\in \mathcal{B}(H)$ and $B\in \mathcal{B}(K)$ be essentially Kato operators, if $M_C$ is generalized Drazin-Riesz invertible for some $C\in \mathcal{B}(K,H)$. Then $A$ is upper semi-Browder, $B$ is lower semi-Browder and $\alpha(A)+\alpha(B)=\beta(A)+\beta(B)$ i.e. $M_{C_{1}}$ is Browder for some $C_{1}\in \mathcal{B}(K,H)$.
		\end{proposition}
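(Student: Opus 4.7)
The plan is to combine Lemmas \ref{L2} and \ref{L1} to promote the essentially Kato hypotheses on $A$ and $B$ to semi-Browder, then read off the defect identity from the punctured neighbourhood theorem, and close with \cite[Theorem 2.9]{slh}.

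I first apply Lemma \ref{L2} to the generalized Drazin-Riesz invertible $M_C$, obtaining $0\notin \mbox{acc}\,\sigma_{lb}(A)$, $0\notin \mbox{acc}\,\sigma_{rb}(B)$, and some $\delta>0$ with
\[
\alpha(\lambda I-A)+\alpha(\lambda I-B)=\beta(\lambda I-A)+\beta(\lambda I-B),\quad 0<|\lambda|<\delta.
\]
Because $H$ and $K$ are Hilbert spaces, every closed subspace is complemented, so $\Phi_{le}(H)=\Phi_{+}(H)$ and $\Phi_{re}(K)=\Phi_{-}(K)$, and consequently $\sigma_{lb}(A)=\sigma_{b_+}(A)$ and $\sigma_{rb}(B)=\sigma_{b_-}(B)$. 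Feeding the essentially Kato hypothesis on $A$ together with $0\notin \mbox{acc}\,\sigma_{b_+}(A)$ into Lemma \ref{L1}(i) forces $A$ to be upper semi-Browder; dually, Lemma \ref{L1}(ii) yields $B$ lower semi-Browder.

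For the defect identity I use the standard characterization: upper semi-Browder is equivalent to $\Phi_+$ together with $0\notin\mbox{acc}\,\sigma_{ap}$, so $A-\lambda I$ is bounded below in some punctured disc around $0$; dually $B-\lambda I$ is surjective near $0$. Shrinking $\delta$ I may therefore assume $\alpha(\lambda I-A)=0$ and $\beta(\lambda I-B)=0$ for $0<|\lambda|<\delta$, so the displayed equality collapses to $\alpha(\lambda I-B)=\beta(\lambda I-A)$. By Lemma \ref{pt} these constants equal $\mbox{ind}(B)=\alpha(B)-\beta(B)$ and $-\mbox{ind}(A)=\beta(A)-\alpha(A)$, giving $\alpha(A)+\alpha(B)=\beta(A)+\beta(B)$. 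The triple $(A,B,\text{defect identity})$ is then exactly the input of \cite[Theorem 2.9]{slh}, already invoked in the proof of Theorem \ref{Th1}, which produces some $C_1\in\mathcal{B}(K,H)$ with $M_{C_1}$ Browder. The only genuinely delicate step is the passage from upper semi-Browder to ``bounded below in a punctured disc around $0$''; this is classical but uses both Lemma \ref{pt} and the finite-ascent condition, so I would spell it out carefully rather than wave hands.
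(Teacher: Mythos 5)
Your proposal is correct and follows essentially the same route as the paper: Lemma \ref{L2} to get the accumulation and defect conditions, Lemma \ref{L1} to upgrade the essentially Kato hypotheses to upper/lower semi-Browder, Lemma \ref{pt} for the index bookkeeping, and \cite[Theorem 2.9]{slh} to produce $C_1$. The only difference is that you spell out the step the paper dismisses as ``easy to see,'' namely deriving $\alpha(A)+\alpha(B)=\beta(A)+\beta(B)$ from the punctured-disc identity via index constancy, which is a welcome addition rather than a deviation.
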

		
	\begin{proof}
		It follows from lemma \ref{L2} that $0\notin \mbox{acc}\sigma_{b_{+}}(A)\cup \mbox{acc}\sigma_{b_{-}}(B)$ and there exists $\delta>0$ such that $\alpha(\lambda I-A)+\alpha (\lambda I-B)=\beta(\lambda I-A)+ \beta(\lambda I-B)$ for every $0<|\lambda|<\delta$. In addition to this $A$ is upper semi-Browder and $B$ is lower semi-Browder according to lemme \ref{L1}. Also, by lemma \ref{pt} it is easy to see that $\alpha(A)+\alpha(B)=\beta(A)+\beta(B)$. Thus, following \cite[Theorem 2.9]{slh} there exists $C_{1}\in \mathcal{B}(K,H)$ such that $M_{C_{1}}$ is Browder.
		\end{proof}
		
		\begin{corollary}
		Let $A\in \mathcal{B}(H)$ and $B\in \mathcal{B}(K)$. Then:
		$$\sigma_{eK}(A) \cup\sigma_{eK}(B)\cup \bigcap_{C\in \mathcal{B}(K,H)}\sigma_{gDR}(M_{C})= \bigcap_{C\in \mathcal{B}(K,H)}\sigma_{b}(M_{C}).$$
		\end{corollary}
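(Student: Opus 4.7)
The proof is a double inclusion, and each direction breaks into small pieces that are already bolted together by results established just above in the paper.

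\textbf{Forward inclusion.} I would handle $\sigma_{eK}(A) \cup \sigma_{eK}(B) \cup \bigcap_{C}\sigma_{gDR}(M_C) \subseteq \bigcap_{C}\sigma_{b}(M_C)$ by showing the three summands on the left are each contained in the right. The last containment is trivial: for every $C$, a Browder operator is generalized Drazin–Riesz invertible, so $\sigma_{gDR}(M_C)\subseteq \sigma_{b}(M_C)$, and intersecting over $C$ preserves this. For $\sigma_{eK}(A)$, I argue by contrapositive: if $\lambda\notin\bigcap_{C}\sigma_{b}(M_C)$, some $M_C-\lambda I$ is Browder, so by \cite[Theorem 2.9]{slh} $A-\lambda I$ is left semi-Browder, hence upper semi-Browder in the Hilbert setting, and then Remark \ref{rek}(a) yields that $A-\lambda I$ is essentially Kato, i.e.\ $\lambda\notin\sigma_{eK}(A)$. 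The $\sigma_{eK}(B)$ inclusion is the same argument using right semi-Browder for $B-\lambda I$.

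\textbf{Reverse inclusion.} For $\bigcap_{C}\sigma_{b}(M_C)\subseteq \sigma_{eK}(A)\cup\sigma_{eK}(B)\cup\bigcap_{C}\sigma_{gDR}(M_C)$, I fix $\lambda$ outside the right-hand side. Then $A-\lambda I$ and $B-\lambda I$ are both essentially Kato, and there exists some $C_0\in\mathcal{B}(K,H)$ with $M_{C_0}-\lambda I$ generalized Drazin–Riesz invertible. This is exactly the hypothesis of the preceding Proposition (applied to $A-\lambda I$, $B-\lambda I$ in place of $A$, $B$), which produces $C_1\in\mathcal{B}(K,H)$ such that $M_{C_1}-\lambda I$ is Browder. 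Hence $\lambda\notin\sigma_{b}(M_{C_1})$, so $\lambda\notin\bigcap_{C}\sigma_{b}(M_C)$.

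\textbf{Where the work sits.} There is essentially no new obstacle — the two nontrivial inputs are already proved in the preceding pages: \cite[Theorem 2.9]{slh} provides the one-sided Browder decomposition of $M_C-\lambda I$ for the forward direction, and the Proposition just stated (itself resting on Lemma \ref{L2}, Lemma \ref{L1}, Lemma \ref{pt} and \cite[Theorem 2.9]{slh}) covers the reverse direction. The only mild care needed is to translate between left/upper and right/lower semi-Browder, which is automatic on Hilbert spaces since every closed subspace is complemented; I would flag this briefly when invoking Remark \ref{rek}(a), and then the corollary follows in a few lines.
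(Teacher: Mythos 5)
Your proof is correct and is exactly the argument the paper intends: the corollary is stated without proof as an immediate consequence of the preceding proposition (for the reverse inclusion) together with \cite[Theorem 2.9]{slh} and the fact that semi-Browder operators are essentially Kato (for the forward inclusion). The only cosmetic remark is that left semi-Browder implies upper semi-Browder in any Banach space by the definitions given (since $\Phi_{le}(X)\subseteq\Phi_{+}(X)$), so no appeal to complementation in Hilbert space is needed for that particular step.
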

		
		\begin{corollary}
		Let $A\in \mathcal{B}(H)$ and $B\in \mathcal{B}(K)$. Then the following statements are equivalent:
		\begin{enumerate}
			\item[(i)] $\bigcap_{C\in \mathcal{B}(K,H)}\sigma_{gDR}(M_{C})= \bigcap_{C\in \mathcal{B}(K,H)}\sigma_{b}(M_{C})$,
			\item[(ii)] $\sigma_{eK}(A) \cup\sigma_{eK}(B)\subset \bigcap_{C\in \mathcal{B}(K,H)}\sigma_{gDR}(M_{C})$.
		\end{enumerate}
		\end{corollary}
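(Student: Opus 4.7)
The plan is to reduce this equivalence to a purely set-theoretic manipulation of the identity established in the preceding corollary, namely
\[
\sigma_{eK}(A)\cup\sigma_{eK}(B)\cup\bigcap_{C\in\mathcal{B}(K,H)}\sigma_{gDR}(M_{C})=\bigcap_{C\in\mathcal{B}(K,H)}\sigma_{b}(M_{C}).
\]
Writing $E=\sigma_{eK}(A)\cup\sigma_{eK}(B)$, $S_{1}=\bigcap_{C}\sigma_{gDR}(M_{C})$ and $S_{2}=\bigcap_{C}\sigma_{b}(M_{C})$, the preceding corollary reads $E\cup S_{1}=S_{2}$, and the task reduces to showing $S_{1}=S_{2}\Longleftrightarrow E\subseteq S_{1}$.

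For the direction (i)$\Rightarrow$(ii), I substitute $S_{2}=S_{1}$ into the identity $E\cup S_{1}=S_{2}$ to obtain $E\cup S_{1}=S_{1}$, which immediately forces $E\subseteq S_{1}$, i.e.\ $\sigma_{eK}(A)\cup\sigma_{eK}(B)\subseteq\bigcap_{C}\sigma_{gDR}(M_{C})$.

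For the direction (ii)$\Rightarrow$(i), I use that every Browder operator is generalized Drazin--Riesz invertible (by \cite[Theorem 2.3]{szm} applied with $N=\{0\}$), so $\sigma_{gDR}(M_{C})\subseteq\sigma_{b}(M_{C})$ for every $C$, and taking intersections yields $S_{1}\subseteq S_{2}$. Then assuming $E\subseteq S_{1}$, one has $E\cup S_{1}=S_{1}$, and combining this with the identity $E\cup S_{1}=S_{2}$ gives $S_{1}=S_{2}$.

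Since both directions are immediate from the previous corollary together with the routine inclusion $\sigma_{gDR}\subseteq\sigma_{b}$, there is no substantial obstacle here; the only subtlety is to cite the previous corollary and the elementary fact that Browder implies generalized Drazin--Riesz invertible. No use of the structural Theorem \ref{Th1} or of Lemma \ref{L1} is needed beyond what has already been absorbed into the preceding corollary.
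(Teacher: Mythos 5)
Your proof is correct and is essentially the argument the paper intends: the paper states this corollary without proof precisely because it is the immediate set-theoretic consequence $E\cup S_{1}=S_{2}\Rightarrow(S_{1}=S_{2}\Leftrightarrow E\subseteq S_{1})$ of the preceding corollary. Your extra appeal to $\sigma_{gDR}(M_{C})\subseteq\sigma_{b}(M_{C})$ in the direction (ii)$\Rightarrow$(i) is harmless but redundant, since $S_{1}\subseteq S_{2}$ already follows from the identity $E\cup S_{1}=S_{2}$.
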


\end{document}